\font\smallit=cmti10
\font\smalltt=cmtt10
\newcommand\numberthis{\addtocounter{equation}{1}\tag{\theequation}}
\renewcommand\section{\@startsection {section}{1}{\z@}
{-30pt \@plus -1ex \@minus -.2ex}
{2.3ex \@plus.2ex}
{\normalfont\normalsize\bfseries}}
\renewcommand\subsection{\@startsection{subsection}{2}{\z@}
{-3.25ex\@plus -1ex \@minus -.2ex}
{1.5ex \@plus .2ex}
{\normalfont\normalsize\bfseries}}
\renewcommand{\@seccntformat}[1]{\csname the#1\endcsname. }
\newtheorem{theorem}{Theorem}
\newtheorem{corollary}{Corollary}
\begin{document}

\begin{center}
\uppercase{\bf Refinements of Some Partition Inequalities}
\vskip 20pt
{\bf James Mc Laughlin \footnote{This work was partially supported by a grant from the Simons Foundation (\#209175 to James Mc Laughlin).}}\\
{\smallit Department of Mathematics, 25 University Avenue,
West Chester University, West Chester, PA 19383}\\
{\tt jmclaughlin2@wcupa.edu}
\end{center}
\vskip 30pt
\centerline{\smallit Received: , Revised: , Accepted: , Published: } 
\vskip 30pt


\date{\today}

\centerline{\bf Abstract}
\noindent
In the present paper we initiate the study of a certain kind of partition inequality, by showing, for example, that if $M\geq 5$ is an integer and the integers $a$ and $b$ are relatively prime to $M$ and satisfy $1\leq a<b<M/2$, and the  $c(m,n)$ are defined by
\[
\frac{1}{(sq^a,sq^{M-a};q^M)_{\infty}}-\frac{1}{(sq^b,sq^{M-b};q^M)_{\infty}}:=\sum_{m,n\geq 0} c(m,n)s^m q^n,
\]
then $c(m, Mn)\geq 0$ for all integers $m\geq 0, n\geq 0$.

A similar result is proved for the integers $d(m,n)$ defined by
\[
(-sq^a,-sq^{M-a};q^M)_{\infty}-(-sq^b,-sq^{M-b};q^M)_{\infty}:=\sum_{m,n\geq 0} d(m,n)s^m q^n.
\]

In each case there are obvious interpretations in terms of integer partitions. For example, if $p_{1,5}(m,n)$ (respectively $p_{2,5}(m,n)$) denotes the number of partitions of $n$ into exactly $m$ parts $\equiv \pm 1 (\mod 5)$ (respectively $\equiv \pm 2 (\mod 5)$), then for each integer $n \geq 1$,
\[
p_{1,5}(m,5n)\geq p_{2,5}(m,5n), \,\,\,1 \leq m \leq 5n.
\]
\pagestyle{myheadings}
\markright{\smalltt INTEGERS: 16 (2016)\hfill}
\thispagestyle{empty}
\baselineskip=12.875pt
\vskip 30pt


\section{Introduction}

The purpose of this paper is to initiate the study of certain types of partition inequalities, which will be described below. Before coming to this refinement we recall some of the previous results.

Let  $p_{1,5}(n)$ (respectively $p_{2,5}(n)$) denote the number of partitions of $n$ into  parts $\equiv \pm 1 (\mod 5)$ (respectively $\equiv \pm 2 (\mod 5)$). As is well known,
\begin{align}\label{rrineqa}
\sum_{n=0}^{\infty}(p_{1,5}(n)-p_{2,5}(n))q^n &=\frac{1}{(q,q^4;q^5)_{\infty}}-\frac{1}{(q^2,q^3;q^5)_{\infty}}\\
&=\sum_{k=0}^{\infty}\frac{q^{k^2}}{(q;q)_k}-\sum_{k=0}^{\infty}\frac{q^{k^2+k}}{(q;q)_k}\notag\\
 &=\sum_{k=1}^{\infty}\frac{q^{k^2}(1-q^k)}{(q;q)_k}\notag\\
 &=\sum_{k=1}^{\infty}\frac{q^{k^2}}{(q;q)_{k-1}},\notag
\end{align}
where the second equality follows from the Rogers-Ramanujan identities. If the final series is expanded as a power series in $q$, the coefficient of $q^n$ is clearly non-negative for all $n$, and thus that
\begin{equation}\label{rrineq}
p_{1,5}(n)-p_{2,5}(n)\geq 0, \,\,\text{ for all } n \in \mathbb{N}.
\end{equation}

At the 1987 A.M.S. Institute on Theta Functions, Leon Ehrenpreis asked if \eqref{rrineq}
could be proved without employing the Rogers-Ramanujan identities. In \cite{K99}, Kadell showed that this was possible by finding an injection from the set of partitions counted by $p_{2,5}(n)$ into the set of partitions counted by $p_{1,5}(n)$. In \cite{BG05}, Berkovich and Garvan extended this result by giving injective proofs of an infinite family of partition inequalities implied by differences of finite $q$-products.
\begin{theorem}[Berkovich and Garvan, \cite{BG05}]\label{bgt}
Suppose $L > 0$, and $1 < r < m - 1$. Then the coefficients in the
$q$-expansion of the difference of the two finite products
\begin{equation}\label{bgeq}
\frac{1}{(q, q^{m-1}; q^m)_{L}} -
\frac{1}{(q^r, q^{m-r}; q^m)_{L}}
\end{equation}
are all nonnegative, if and only if $r \nmid (m - r)$ and $(m- r) \nmid r$.
\end{theorem}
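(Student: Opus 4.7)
The plan is to prove both directions of the biconditional combinatorially, interpreting the two products as generating functions for partitions with prescribed parts. For $1 \leq a < m/2$, write
\[
\frac{1}{(q^a, q^{m-a}; q^m)_L} = \sum_{n \geq 0} p_a^{(L)}(n)\, q^n,
\]
where $p_a^{(L)}(n)$ counts partitions of $n$ whose parts all lie in
\[
\mathcal{S}_a = \{\, a + jm,\; (m-a) + jm : 0 \leq j \leq L-1 \,\}.
\]
Nonnegativity of the coefficients of \eqref{bgeq} is then equivalent to the existence, for every $n$, of a sum-preserving injection $\phi \colon \{\mathcal{S}_r\text{-partitions of } n\} \hookrightarrow \{\mathcal{S}_1\text{-partitions of } n\}$.

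For the sufficiency direction, assume $r \nmid (m-r)$ and $(m-r) \nmid r$. I would build $\phi$ in two stages. First, given an $\mathcal{S}_r$-partition $\lambda$, pair off as many parts of residue $r$ (mod $m$) as possible with parts of residue $m-r$; each such pair, of sizes $r + im$ and $(m-r) + jm$, sums to $(i+j+1)m$, which is naturally decomposable into $\mathcal{S}_1$-parts of the form $1 + \ell m$ and $(m-1) + \ell' m$. Second, the leftover parts (all of the same residue, say $r$) are replaced by $\mathcal{S}_1$-parts: this is where the divisibility hypotheses do the work, since $r \nmid (m-r)$ forces a unique canonical way of expressing each leftover as a sum of residue-$1$ and residue-$(m-1)$ parts without exceeding the multiplicity cap $L$. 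Injectivity then follows by exhibiting an explicit inverse algorithm that recovers $\lambda$ from $\phi(\lambda)$.

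For the necessity direction, suppose $m - r = kr$ for some positive integer $k \geq 1$ (the symmetric case $r \mid (m-r)$ is handled identically). Then the integer $N = m - r$ admits two distinct $\mathcal{S}_r$-partitions, namely $(m-r)$ itself and the $k$-fold part $(r, r, \ldots, r)$, whereas for suitably small $N$ relative to $m$ and $L$ its $\mathcal{S}_1$-partitions are very restricted. A short enumeration at $q^N$ or a nearby power then exhibits a strictly negative coefficient of the difference in \eqref{bgeq}, establishing the converse.

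The main obstacle is the explicit construction of $\phi$ in the sufficiency direction, particularly the simultaneous control of the multiplicity cap $L$ and of injectivity across partitions of all shapes. This bookkeeping is precisely where the divisibility hypotheses are essential: relaxing either of them creates collisions in the leftover step, because a single part of the larger residue class becomes exchangeable for several parts of the smaller one, identifying two distinct preimages of $\phi$. Following Berkovich–Garvan, I would expect this step to require several pages of careful combinatorial case analysis.
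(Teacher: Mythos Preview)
The paper does not contain a proof of this theorem. Theorem~\ref{bgt} is quoted in the introduction as a result of Berkovich and Garvan \cite{BG05}, purely as background and motivation; no argument for it is given anywhere in the present paper. So there is no ``paper's own proof'' against which to compare your proposal.

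That said, a few remarks on the proposal itself. Your overall strategy---interpret each product as a generating function for partitions into parts from $\mathcal{S}_a$ and prove nonnegativity by constructing an explicit injection---is indeed the route taken in \cite{BG05}, and the paper under review alludes to this when it says Berkovich and Garvan gave ``injective proofs'' of this family of inequalities. However, what you have written is a plan rather than a proof: the two-stage description of $\phi$ (pair off residues, then process leftovers) does not yet specify a well-defined map, and the claim that the divisibility hypotheses force ``a unique canonical way'' to rewrite leftover parts is asserted, not demonstrated. You acknowledge this yourself in the final paragraph. Similarly, in the necessity direction your argument is incomplete: when $m-r = kr$ you exhibit two $\mathcal{S}_r$-partitions of $N=m-r$, but you do not actually count the $\mathcal{S}_1$-partitions of $N$ (for instance, $(1^{m-r})$ is always one such partition), so a negative coefficient has not been exhibited. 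A correct version requires choosing $N$ more carefully and doing the count.

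In short: the approach is the right one and matches the cited literature, but the proposal as written is a sketch with the substantive steps left to be filled in, and there is nothing in the present paper to compare it to.
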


In \cite{A13}, Andrews used his anti-telescoping
technique to provide an alternative answer to the question of Ehrenpreis. In the same paper he also proved two similar results for the non-negativity of the coefficients of the power series deriving from differences of finite $q$-products with modulus eight. One of these is contained in the following theorem (which Andrews called the ``finite little G\"ollnitz'' theorem).
\begin{theorem} [Andrews, \cite{A13}] If $L > 0$, and  the sequence $\{f_n\}$ is defined by
\[
\sum_{n=0}^{\infty}f_nq^n=\frac{1}{(q,q^{5},q^6;q^8)_{L}}-\frac{1}{(q^2,q^{3},q^7;q^8)_{L}},
\]
then $
f_n \geq 0, \text{ for all } n\geq 0$.
\end{theorem}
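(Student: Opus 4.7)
The plan is to mimic the anti-telescoping technique that Andrews developed in [A13] for the original Ehrenpreis question. Set
\[
A_L := (q,q^5,q^6;q^8)_L, \qquad B_L := (q^2,q^3,q^7;q^8)_L,
\]
so that the stated generating function is $1/A_L-1/B_L$. The simple telescoping identity
\[
\frac{1}{A_L}-\frac{1}{B_L}=\sum_{k=1}^{L}\left(\frac{1}{A_k B_{L-k}}-\frac{1}{A_{k-1}B_{L-k+1}}\right),
\]
verified by reindexing the second half of the sum, reduces the problem to showing non-negativity of each summand. Putting the two terms of the $k$th summand over the common denominator $A_kB_{L-k+1}$, it becomes $N_k(q)/(A_kB_{L-k+1})$, where
\[
N_k(q)=(1-q^{8(L-k)+2})(1-q^{8(L-k)+3})(1-q^{8(L-k)+7})-(1-q^{8k-7})(1-q^{8k-3})(1-q^{8k-2}).
\]
Since $1/(A_kB_{L-k+1})$ already has non-negative power series coefficients, it suffices to show that $N_k(q)$, after cancellation against a few factors of the denominator, itself becomes a non-negative power series.

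For the second step I would expand $N_k(q)$ into its eight monomials (the constant $1$'s cancel) and pair each of the three ``small'' negative monomials $-q^{8k-7}$, $-q^{8k-3}$, $-q^{8k-2}$ with a ``small'' positive monomial $q^{8(L-k)+2}$, $q^{8(L-k)+3}$, $q^{8(L-k)+7}$, producing differences of the form $\pm q^{\min}(1-q^{c})$ with $c>0$, and similarly for the three mixed-degree monomials near $16(L-k)$ or $16(k-1)$. The complementarity $8-1=7$, $8-5=3$, $8-6=2$ of the two residue sets is the arithmetic feature that makes a clean pairing possible, and each resulting factor $1-q^c$ can then be absorbed by a matching factor already present in $A_kB_{L-k+1}$, leaving behind a non-negative series.

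The main obstacle will be the combinatorial bookkeeping required to ensure that every ``cancellation factor'' introduced by the pairing actually divides $A_kB_{L-k+1}$, including the boundary cases $k=1$ and $k=L$ in which some factors may be absent from the denominator. In particular one must check that the factors of $A_k$ and $B_{L-k+1}$ (which come from residues $\{1,5,6\}$ and $\{2,3,7\}$ mod $8$) together furnish all the factors needed to absorb the negative contributions in $N_k(q)$ uniformly in $k$ and $L$; this is precisely the step where the specific structure of the little G\"ollnitz modulus $8$ is essential, and where Andrews' original argument must be tailored to the trios of residues at hand. Once the non-negativity of each summand is established, summing over $k=1,\dots,L$ gives $f_n \ge 0$ for all $n\ge 0$.
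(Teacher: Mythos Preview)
The paper does \emph{not} prove this theorem; it is stated in the introduction as a result of Andrews and attributed to \cite{A13} without proof. Hence there is no ``paper's own proof'' to compare against. Your approach is the right one in spirit---the paper itself notes that Andrews proved this via his anti-telescoping technique---so you are reconstructing the original method rather than offering an alternative.

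Two remarks on your sketch itself. First, the telescope you write down,
\[
\frac{1}{A_L}-\frac{1}{B_L}=\sum_{k=1}^{L}\left(\frac{1}{A_k B_{L-k}}-\frac{1}{A_{k-1}B_{L-k+1}}\right),
\]
is valid, but it is not quite Andrews' standard anti-telescope. His decomposition replaces the factors one at a time in order, using intermediate products of the form $a_1\cdots a_j\, b_{j+1}\cdots b_L$, so that the $j$th numerator is $b_j-a_j$ (same index). Your version instead compares $b_{L-k+1}$ against $a_k$, and the resulting numerator $N_k(q)$ mixes exponents near $8(L-k)$ with exponents near $8k$. This makes the cancellation bookkeeping harder, not easier: for small $k$ and large $L$ the positive and negative monomials in $N_k$ are far apart in degree, and it is not clear that the needed absorbing factors all lie in $A_kB_{L-k+1}$. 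If you switch to the standard in-order telescope the numerators become $b_j-a_j$, which depend on a single index and are much more tractable.

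Second, as you yourself acknowledge, the proposal stops precisely at the hard step: verifying that every negative contribution in the numerator is cancelled by an explicit factor of the denominator, uniformly in $k$ and $L$ (including the boundary cases). Until that is carried out this is a plan, not a proof.
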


This result of Andrews was extended by Berkovich and  Grizzell, who proved combinatorially the following result (\cite[Theorem 1.3]{BG12}).
\begin{theorem}[Berkovich and Grizzell, \cite{BG12}]\label{bgt1} For any $L > 0$, and any odd $y > 1$, the $q$-series expansion of
\begin{equation}\label{bgeq1}
\frac{1}{(q,q^{y+2},q^{2y};q^{2y+2})_{L}}-\frac{1}{(q^2,q^{y},q^{2y+1};q^{2y+2})_{L}}
= \sum_{n=0}^{\infty}a(L,y,n)q^n
\end{equation}
has only non-negative coefficients.
\end{theorem}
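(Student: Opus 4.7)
The plan is to establish non-negativity of $a(L,y,n)$ combinatorially by building, for each $n$, a weight-preserving injection $\iota\colon\mathcal B_{L,y}(n)\hookrightarrow\mathcal A_{L,y}(n)$. Here $\mathcal A_{L,y}(n)$ denotes the set of partitions of $n$ into parts of the form $(2y+2)j+r$ with $0\le j\le L-1$ and $r\in\{1,\,y+2,\,2y\}$, and $\mathcal B_{L,y}(n)$ is the analogous set for $r\in\{2,\,y,\,2y+1\}$. Parts are allowed to repeat arbitrarily, so non-negativity of $a(L,y,n)$ is equivalent to producing such an $\iota$.

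The injection is suggested by the exact identities $2=1+1$, $2y+1=1+2y$, $y=\underbrace{1+\cdots+1}_{y\text{ ones}}$, together with the congruences
\[
2(y+2)\equiv 2,\qquad (y+2)+2y\equiv y \pmod{2y+2},
\]
which furnish alternative higher decompositions for shifted $\mathcal B$-parts. Concretely, I would process each $\mathcal B$-part $r+(2y+2)k$ by assigning it an ordered pair of $\mathcal A$-parts whose shifts $a,b$ satisfy $a+b=k$ (when the exact identity is applied) or $a+b=k-1$ (when one of the congruences absorbs an extra $2y+2$), then aggregate the resulting multiset into a single $\mathcal A$-partition. The first task is to nail down a canonical rule for each class and shift so that the shifts $a,b$ are forced to lie in $\{0,\dots,L-1\}$; this bound follows at once from $k\le L-1$.

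The real obstacle, and the heart of the proof, is injectivity. A naive rule that always reduces each $\mathcal B$-part to $1$s fails already in small cases: with $y=3,\ L=2$, the two $\mathcal B$-partitions $3+3+3+2$ and $3+2+2+2+2$ of $11$ both collapse to $1^{11}$. Avoiding such collisions requires preferring the congruence-based rules — sending $2\mapsto(y+2)+(y+2)$ and $y\mapsto(y+2)+2y$ whenever the shift budget permits — and then proving invertibility by an explicit unpacking of the image partition that reads off its largest parts first, pairing each $(y+2)$-class part with a $2y$-class part to reconstitute a $y$-class $\mathcal B$-part and matching leftover $1$-class parts with leftover $2y$-class parts to reconstitute $(2y+1)$-class $\mathcal B$-parts. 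Should a clean direct injection prove too delicate, a natural fallback is to adapt Andrews' anti-telescoping technique from \cite{A13}: interpolate between the two products via a sequence of hybrid generating functions $H_0,\dots,H_L$ and verify non-negativity of each telescoped difference $H_j-H_{j+1}$ algebraically, sidestepping the inversion step entirely.
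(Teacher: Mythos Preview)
The paper does not contain a proof of this statement: Theorem~\ref{bgt1} is quoted in the introduction as a known result of Berkovich and Grizzell \cite{BG12}, and the present paper only remarks that ``Berkovich and Grizzell proved all of their results combinatorially.'' There is therefore no proof in the paper to compare your proposal against; the paper's own contributions are Theorems~\ref{t1} and~\ref{t2}, whose proofs are purely $q$-series manipulations (via the $q$-binomial theorem) and do not touch the Berkovich--Grizzell result.

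As a standalone attempt, your proposal is a reasonable outline of the combinatorial strategy that \cite{BG12} itself uses, but it is not a proof: you correctly identify that the crux is injectivity, you exhibit a genuine obstruction to the naive map, and then you stop short of specifying a rule and verifying that it is invertible. The phrases ``nail down a canonical rule'' and ``should a clean direct injection prove too delicate, a natural fallback is\ldots'' make clear that the construction has not actually been carried out. In particular, the suggested preference for the congruence-based replacements $2\mapsto (y{+}2)+(y{+}2)$ and $y\mapsto (y{+}2)+2y$ requires a shift $k\ge 1$ to have room for the extra $2y+2$, so at shift $k=0$ you are forced back to the $1$-only rules and the collision problem reappears; you have not said how the inversion procedure distinguishes, from a pile of $1$-class parts in the image, how many came from $2$'s, how many from $y$'s, and how many from $(2y{+}1)$'s. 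Until that bookkeeping is made explicit and shown to be recoverable, the argument has a genuine gap.
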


They also proved an extension (Theorem 4.1 in \cite{BG12}) of the theorem above.
\begin{theorem} [Berkovich and Grizzell, \cite{BG12}] For any $L > 0$, and any odd $y > 1$, and any $x$ with $1 < x \leq y + 2$, the $q$-series expansion of
\begin{equation*}
\frac{1}{(q,q^{x},q^{2y};q^{2y+2})_{L}}-\frac{1}{(q^2,q^{y},q^{2y+1};q^{2y+2})_{L}}\\
= \sum_{n=0}^{\infty}a(L,x,y,n)q^n
\end{equation*}
has only non-negative coefficients.
\end{theorem}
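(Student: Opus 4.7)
The plan is to reduce the problem to the already-proved Theorem \ref{bgt1} by inserting a telescoping intermediate. Set
\[
A = \frac{1}{(q,q^{x},q^{2y};q^{2y+2})_{L}}, \quad B = \frac{1}{(q,q^{y+2},q^{2y};q^{2y+2})_{L}}, \quad C = \frac{1}{(q^2,q^{y},q^{2y+1};q^{2y+2})_{L}},
\]
so the goal is to show $A - C$ has non-negative $q$-coefficients. Writing $A - C = (A - B) + (B - C)$, the difference $B - C$ is non-negative by Theorem \ref{bgt1}. It therefore suffices to prove $A - B \geq 0$ coefficient-wise, which is where the hypothesis $1 < x \leq y+2$ will enter.

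For $A - B$, I factor out the common piece:
\[
A - B = \frac{1}{(q^{2y};q^{2y+2})_{L}}\left(\frac{1}{(q,q^{x};q^{2y+2})_{L}} - \frac{1}{(q,q^{y+2};q^{2y+2})_{L}}\right).
\]
Since $1/(q^{2y};q^{2y+2})_L$ is itself a partition generating function and hence has non-negative coefficients, the task reduces to showing that the bracketed difference is non-negative. I propose to do this by exhibiting a sum-preserving injection $\phi$ from partitions counted by $1/(q,q^{y+2};q^{2y+2})_L$, whose parts are drawn from $\{1 + j(2y+2) : 0 \leq j \leq L-1\} \cup \{(y+2) + k(2y+2) : 0 \leq k \leq L-1\}$, into partitions counted by $1/(q,q^{x};q^{2y+2})_L$. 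The map $\phi$ leaves each part $1 + j(2y+2)$ unchanged and replaces every part $(y+2) + k(2y+2)$ by one part $x + k(2y+2)$ together with $y+2-x$ copies of $1$. The identity $y+2 = x + (y+2-x)$, valid because $x \leq y+2$, shows that $\phi$ preserves the total weight, and all image parts manifestly lie in the allowed set.

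Injectivity of $\phi$ follows from the fact that the domain contains no parts of type $x + k(2y+2)$: the multiplicity of $x + k(2y+2)$ in $\phi(\pi)$ recovers the number of original parts $(y+2) + k(2y+2)$ for each $k$, after which the original multiplicity of $1$ is the multiplicity of $1$ in $\phi(\pi)$ minus $(y+2-x)$ times the total number of $x$-type parts produced. The main conceptual step, and the only real difficulty, is spotting the right telescoping intermediate $B$; once it is in place, Theorem \ref{bgt1} disposes of one half of the difference and the elementary splitting ``$(y+2)$-part $\longmapsto$ one $x$-part plus $(y+2-x)$ ones'' disposes of the other. The routine verifications---that the shifted indices stay in $[0, L-1]$ and that no parts of forbidden shape are produced---are automatic from the construction, and the extreme case $x = y+2$ makes $\phi$ the identity, matching Theorem \ref{bgt1} exactly.
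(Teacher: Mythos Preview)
The paper does not contain a proof of this statement: it is quoted from \cite{BG12} as one of Berkovich and Grizzell's results, and the paper merely remarks that ``Berkovich and Grizzell proved all of their results combinatorially.'' There is therefore no in-paper argument to compare against.

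That said, your reduction is sound. The telescoping split $A-C=(A-B)+(B-C)$ with $B$ equal to the Theorem~\ref{bgt1} product is exactly the right idea: it isolates the new parameter $x$ in the piece $A-B$, where the common factors $(q;q^{2y+2})_L$ and $(q^{2y};q^{2y+2})_L$ can be pulled out, leaving only
\[
\frac{1}{(q^{x};q^{2y+2})_L}-\frac{1}{(q^{y+2};q^{2y+2})_L}
\]
(up to the positive prefactor you wrote). Your injection ``replace each part $(y+2)+k(2y+2)$ by one part $x+k(2y+2)$ and $y+2-x$ copies of $1$'' is weight-preserving because $x\le y+2$, lands in the correct target because the index $k$ is unchanged and $1$ is always an allowed part, and is injective because $1<x<2y+2$ forces the residue classes $1$ and $x$ modulo $2y+2$ to be distinct, so the $x$-type parts in the image can be read off unambiguously. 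The boundary case $x=y+2$ collapses to the identity, consistent with Theorem~\ref{bgt1}.

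In short: your argument is correct, and it is in the same combinatorial spirit as the original \cite{BG12} proof; the only caveat is that it is a \emph{reduction} to Theorem~\ref{bgt1} rather than an independent proof, so its validity ultimately rests on that earlier result.
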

The authors also give exact conditions under which the coefficients\\ $a(L,y,n)$ and $a(L,x,y,n)$ are equal to 0.
Berkovich and Grizzell continued their investigations in \cite{BG13}, where the following theorem is proved.
\begin{theorem} [Berkovich and Grizzell, \cite{BG13}] For any octuple of positive integers $(L, m, x, y, z, r, R, \rho)$,
 the $q$-series expansion of
\begin{multline*}
\frac{1}{(q^x,q^{y},q^z,q^{rx+Ry+\rho z};q^{m})_{L}}-\frac{1}{(q^{rx},q^{Ry},q^{\rho z},q^{x+y+z};q^{m})_{L}}\\
= \sum_{n=0}^{\infty}a(L,x,y,z, r,R,\rho,n)q^n
\end{multline*}
has only non-negative coefficients.
\end{theorem}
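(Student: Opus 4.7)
My plan is to prove the theorem by constructing an explicit weight-preserving injection between two families of colored partitions, in the spirit of Berkovich and Grizzell's earlier work \cite{BG05,BG12}. The reciprocal $1/(q^x,q^y,q^z,q^{rx+Ry+\rho z};q^m)_L$ enumerates partitions whose parts are drawn from
\[
\mathcal{F} \;=\; \bigcup_{i=0}^{L-1}\{x+im,\ y+im,\ z+im,\ (rx+Ry+\rho z)+im\},
\]
each part carrying a label recording which of the four families it belongs to, and likewise the second product enumerates partitions with parts in
\[
\mathcal{G} \;=\; \bigcup_{i=0}^{L-1}\{rx+im,\ Ry+im,\ \rho z+im,\ (x+y+z)+im\}.
\]
Non-negativity of $a(L,x,y,z,r,R,\rho,n)$ is therefore equivalent to producing a weight-preserving injection $\phi$ from $\mathcal{G}$-partitions of $n$ into $\mathcal{F}$-partitions of $n$.

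For $\phi$ I would use the natural splitting rule, applied part by part: a $\mathcal{G}$-part $rx+im$ is broken into $r-1$ unshifted copies of $x$ together with one shifted part $x+im$; analogously, $Ry+im$ splits into $R-1$ copies of $y$ and one $y+im$, and $\rho z+im$ splits into $\rho-1$ copies of $z$ and one $z+im$; finally a $\mathcal{G}$-part $(x+y+z)+im$ is split into the three pieces $x$, $y$, and $z+im$. Because $0\le i\le L-1$, every piece produced is a legal $\mathcal{F}$-part, and the total weight is preserved, so $\phi$ certainly maps $\mathcal{G}$-partitions into $\mathcal{F}$-partitions of the same size.

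The main obstacle, and the technical heart of the proof, is showing that $\phi$ is injective. There is genuine ambiguity in the reverse direction: an unshifted $x$ in the image could be an anchor of a split of $rx+im$ or the $x$-piece of a split of $(x+y+z)+jm$, and likewise the shifted $z+im$ could come from a split of either $\rho z+im$ or $(x+y+z)+im$. To resolve this I would adopt the ``glue-and-shift'' strategy of \cite{A13,BG12}: process the parts of the $\mathcal{G}$-partition in decreasing order, tag each anchor with the shifted part that accompanies it, and implement $\phi^{-1}$ by a greedy top-down scan of the image that pairs each shifted $y+im$ (resp.\ $z+im$, and each sufficiently large $x+im$) with the topmost $R-1$ (resp.\ $\rho-1$, $r-1$) unshifted copies of $y$ (resp.\ $z$, $x$), and then reassembles each leftover triple $(x,y,z+jm)$ into a single $(x+y+z)+jm$. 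Verifying that this recovery is well-defined—the bookkeeping that at every stage the required anchors are present and that no collision occurs between an $(x+y+z)$-split and a $\rho z$-split producing the same shifted $z+im$—is what I expect to be most delicate, and I anticipate it will rest on multiplicity invariants analogous to those underpinning Theorems 1.3 and 4.1 of \cite{BG12}. Once the injection is in place, the theorem follows at once.
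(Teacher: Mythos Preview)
The paper does not itself prove this theorem; it is quoted from \cite{BG13} as background, so there is no ``paper's own proof'' to compare against.  Your combinatorial plan is in the right spirit, but the specific splitting rule you propose is \emph{not} injective, and the failure is exactly at the point you flag as delicate.

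Take $L\ge 2$ and any shift $1\le j\le L-1$, and compare the two $\mathcal{G}$-partitions
\[
C=\{\rho z+jm,\ (x+y+z)+0\cdot m\}\qquad\text{and}\qquad D=\{\rho z+0\cdot m,\ (x+y+z)+jm\}.
\]
Under your rule, $\rho z+jm\mapsto\{(\rho-1)\times z,\ z+jm\}$ and $(x+y+z)+0\mapsto\{x,y,z\}$, so $\phi(C)=\{x,\ y,\ \rho\times z,\ z+jm\}$.  On the other hand $\rho z+0\mapsto\{\rho\times z\}$ and $(x+y+z)+jm\mapsto\{x,\ y,\ z+jm\}$, so $\phi(D)=\{x,\ y,\ \rho\times z,\ z+jm\}$ as well.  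Thus $\phi(C)=\phi(D)$ with $C\ne D$, and your greedy inverse (pair $z+jm$ with $\rho-1$ unshifted $z$'s, then reassemble the leftover $x,y,z$) returns $C$ in both cases.  The same collision occurs between $\{\rho z+jm\}\cup\{(x+y+z)+im\}$ and $\{\rho z+im\}\cup\{(x+y+z)+jm\}$ for any $i\ne j$.  So the map, as written, is genuinely many-to-one; no amount of bookkeeping on the image side can repair it, because the images are identical as labelled multisets.  A working injection (as in \cite{BG13}) has to route the shift $im$ on an $(x+y+z)$-part somewhere other than the $z$-piece, or otherwise break the symmetry that causes this collapse; until you specify such a modification, the argument has a real gap.
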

In each  of the above results, the finite $q$-products were all of the same order, and the modulus in each case was the same power of $q$. In \cite{BG14} they derived a result involving finite $q$-products of two different orders, and with two different moduli.
\begin{theorem} [Berkovich and Grizzell, \cite{BG14}] For any positive integers $m, n, y$, and $z$, with $\gcd(n, y) = 1$,
and integers $K$ and $L$, with $K \geq L \geq 0$,
\begin{equation*}
\frac{1}{(q^z; q^m)_K(q^{nyz}; q^{nm})_L}-\frac{1}{(q^{yz}; q^m)_K(q^{nz}; q^{nm})_L}
=\sum_{k=0}^{\infty}a(K,L,x,y,z, n,m,k)q^k
\end{equation*}
has only non-negative coefficients.
\end{theorem}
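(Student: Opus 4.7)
The plan is to construct a weight-preserving injection from the partitions enumerated by the subtracted (second) generating function into those enumerated by the first. Expanding each reciprocal factor as a geometric series, the first generating function counts partitions whose parts come from
$$\mathcal{A}=\{z+im:0\leq i<K\}\sqcup\{n(yz+jm):0\leq j<L\},$$
and the second counts partitions whose parts come from
$$\mathcal{B}=\{yz+im:0\leq i<K\}\sqcup\{n(z+jm):0\leq j<L\}.$$
It is therefore enough, for each $N\geq 0$, to exhibit an injection $\phi$ from $\mathcal{B}$-partitions of $N$ into $\mathcal{A}$-partitions of $N$.

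The easy half of $\phi$ handles the $\mathcal{B}_{2}$-parts: replace each occurrence of $n(z+jm)\in\mathcal{B}_{2}$ by $n$ copies of $z+jm\in\mathcal{A}_{1}$. This is legitimate precisely because $j<L\leq K$, which is where the hypothesis $K\geq L$ enters. The difficult half handles the $\mathcal{B}_{1}$-parts $yz+im$, and here the coprimality $\gcd(n,y)=1$ must be invoked. In the model case $K=L=1$, partitions on the $\mathcal{B}$-side correspond to pairs $(a,b)\in\mathbb{Z}_{\geq 0}^{2}$ with $ay+bn=N/z$, and those on the $\mathcal{A}$-side to pairs $(c,d)\in\mathbb{Z}_{\geq 0}^{2}$ with $c+dny=N/z$. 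The $\mathcal{A}$-side always admits $\lfloor N/(nyz)\rfloor+1$ representations (the maximum possible), while the valid $a$'s on the $\mathcal{B}$-side form an arithmetic progression $a_{0},a_{0}+n,a_{0}+2n,\ldots$ of length at most $\lfloor N/(nyz)\rfloor+1$ (this uses $\gcd(n,y)=1$). One may therefore inject by sending the $k$th admissible pair $(a_{0}+kn,b)$ to $(c,d)=(N/z-kny,\,k)$.

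To extend this to arbitrary $K\geq L$, I would process one slot $i$ at a time. Write the multiplicity of $yz+im$ in a given $\mathcal{B}$-partition as $qn+r$ with $0\leq r<n$. When $i<L$, package each block of $n$ copies of $yz+im$ into a single $\mathcal{A}_{2}$-part $n(yz+im)$ and dispose of the residual $r$ copies via the Bézout map of the previous paragraph applied slot-wise. When $L\leq i<K$, no $\mathcal{A}_{2}$-slot is available, so the whole slot must be routed into $\mathcal{A}_{1}$, and the surplus weight redistributed across other slots according to a fixed rule. The techniques from \cite{BG12,BG13} on slot-wise sieving should transfer here after adapting to two different moduli $m$ and $nm$.

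The main obstacle will be proving that $\phi$ is \emph{globally} injective. The pitfall is that splitting an $\mathcal{B}_{2}$-part $n(z+jm)$ into $n$ copies of $z+jm$ can produce the same $\mathcal{A}_{1}$-configuration as certain moves applied to $\mathcal{B}_{1}$-parts in slot $j$, so the image must somehow record which mechanism created it. Resolving this requires a canonical normal form on the $\mathcal{A}$-side --- essentially a deterministic rule deciding when $n$ identical $\mathcal{A}_{1}$-parts of slot $j$ should be read back as a $\mathcal{B}_{2}$-part of slot $j$ versus as the residue coming from the slot-$j$ Bézout step --- and verifying that this normal form is consistent across all slots, and that the $\mathcal{A}_{2}$-multiplicities together with the residual $\mathcal{A}_{1}$-multiplicities uniquely determine the preimage, is where the bulk of the combinatorial work lies.
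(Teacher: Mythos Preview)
This statement is not proved in the present paper at all: it is quoted in the introduction as a known result of Berkovich and Grizzell \cite{BG14}, alongside several other theorems cited as background. There is therefore no ``paper's own proof'' to compare your attempt against. The paper does remark that Berkovich and Grizzell proved all of their results combinatorially, so your injection strategy is in the right spirit, but for the actual argument you must consult \cite{BG14} directly.

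On the merits of what you wrote: the setup is correct --- interpreting the two products as generating functions for partitions with parts in $\mathcal{A}$ and $\mathcal{B}$, and aiming for a weight-preserving injection $\phi\colon\mathcal{B}\text{-partitions}\to\mathcal{A}\text{-partitions}$. The ``easy half'' (splitting each $\mathcal{B}_{2}$-part $n(z+jm)$ into $n$ copies of $z+jm\in\mathcal{A}_{1}$, using $j<L\leq K$) is right, and your $K=L=1$ analysis correctly isolates where $\gcd(n,y)=1$ enters.

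However, what you have submitted is a plan, not a proof. You yourself flag the decisive gap: global injectivity. The collision you describe --- that the image of a split $\mathcal{B}_{2}$-part can coincide with the image of a processed $\mathcal{B}_{1}$-configuration in the same slot --- is real, and your proposed fix (``a canonical normal form on the $\mathcal{A}$-side'') is only asserted to exist, not constructed. The phrases ``should transfer here after adapting'' and ``is where the bulk of the combinatorial work lies'' are explicit admissions that the core of the argument is missing. Until you specify the normal form and verify that it is simultaneously consistent across all $K$ slots (not just slot-by-slot), you do not have a proof.
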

Note that Berkovich and Grizzell proved all of their results combinatorially, and that just as  the statement  at \eqref{rrineqa} was interpreted combinatorially at \eqref{rrineq}, each of the statements proved by those authors for differences of $q$-products may be interpreted  in terms of inequalities for certain restricted partition functions.

For example (employing the notation of the authors in \cite{BG12}), if
$P_1(L, y, n)$ denotes the number of partitions of $n$ into parts $\equiv 1, y + 2, 2y
(\mod (2y + 2))$ with the largest part less than  $(2y + 2)L$ and $P_2(L, y, n)$
denotes the number of partitions of $n$ into parts $\equiv 2, y, 2y +1 (\mod (2y +2))$ with
the largest part also less than  $(2y + 2)L$, then Theorem \ref{bgt1} implies that
\begin{equation*}
P_1(L, y, n)\geq P_2(L, y, n)
\end{equation*}
for all positive integers $L$ and $n$ (recall that $y$ is any odd integer greater than 1).

A refinement of the ordinary partition function $p(n)$ is $p(m,n)$, the number of partitions of $n$ into exactly $m$ parts, since
\[
p(1,n)+p(2,n)+\dots + p(n-1,n)+p(n,n) = p(n).
\]

This refinement  leads naturally  to a question that arises from the partition inequalities implied by the above theorems. Suppose $p_1(n)$ and $p_2(n)$ are two restricted partition counting functions such that
\[
p_1(n)\geq p_2(n), \,\,\text{ for all }  n \in \mathbb{N}.
\]
 Let $p_1(m,n)$ (respectively  $p_2(m,n)$) denote the number of partitions of the type counted by $p_1(n)$ (respectively  $p_2(n)$) into exactly $m$ parts. For which $n$ (if any) does it hold that
 \[
p_1(m,n)\geq p_2(m,n), \,\,  1\leq m \leq n?
\]

 Such questions are considered in the next section. An example of the results in the present paper is the following.

Let  $p_{1,5}(m,n)$ (respectively $p_{2,5}(m,n)$) denote the number of partitions of $n$ into exactly $m$ parts $\equiv \pm 1 (\mod 5)$ (respectively $\equiv \pm 2 (\mod 5)$). Then, for each integer $n \geq 1$,
\[
p_{1,5}(m,5n)\geq p_{2,5}(m,5n), \,\,\,1 \leq m \leq 5n.
\]

This is illustrated for $n=4$ (or $5n=20$) in Table \ref{Ta:t0} below.

\begin{center}
\begin{table}[ht]
\centering
  \begin{tabular} {|c|c|c ||c|c|c|}
    \hline
   $m$ &$p_{1,5}(m,20)$& $p_{2,5}(m,20)$ &   $m$ &$p_{1,5}(m,20)$& $p_{2,5}(m,20)$ \\ \hline

 1 & 0 & 0 & 11 & 1 & 0 \\
 2 & 4 & 4 & 12 & 2 & 0 \\
 3 & 0 & 0 & 13 & 0 & 0 \\
 4 & 5 & 5 & 14 & 1 & 0 \\
 5 & 4 & 3 & 15 & 1 & 0 \\
 6 & 2 & 2 & 16 & 0 & 0 \\
 7 & 4 & 3 & 17 & 1 & 0 \\
 8 & 1 & 1 & 18 & 0 & 0 \\
 9 & 2 & 1 & 19 & 0 & 0 \\
 10 & 2 & 1 & 20 & 1 & 0 \\
    \hline
  \end{tabular}
  \caption{$p_{1,5}(m,20)\geq p_{2,5}(m,20), \,\,\,1 \leq m \leq 20$.}\label{Ta:t0}
  \end{table}
\end{center}

\section{Main Results}
The example above follows as an implication of a special case of the next theorem.

\begin{theorem}\label{t1}
Let $M\geq 5$ be a positive integer, and let $a$ and $b$ be integers such that $1 \leq a< b <M/2$ and $\gcd(a,M)=\gcd(b,M)=1$. Define the integers $c(m,n)$ by
\begin{equation}\label{t1eq}
\frac{1}{(sq^a,sq^{M-a};q^M)_{\infty}}-\frac{1}{(sq^b,sq^{M-b};q^M)_{\infty}}:=\sum_{m,n\geq 0} c(m,n)s^m q^n.
\end{equation}

(i) Then $c(m, Mn)\geq 0$ for all integers $m, n\geq 0$.

(ii) If, in addition, $M$ is even, then $c(m, Mn+M/2)\geq 0$ for all integers $m, n\geq 0$.
\end{theorem}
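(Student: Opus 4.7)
The plan is to expand both generating functions by the $q$-binomial theorem, pair summation indices via the involution $r\leftrightarrow m-r$, and exhibit each paired difference as a transparently non-negative rational $q$-expansion.

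First, with $Q=q^M$,
\[
[s^m]\frac{1}{(sq^a,sq^{M-a};q^M)_\infty} = \sum_{r=0}^{m}\frac{q^{a(m-r)+(M-a)r}}{(q^M;q^M)_r(q^M;q^M)_{m-r}},
\]
and similarly for $b$. The numerator exponent reduces mod $M$ to $a(m-2r)$, which vanishes iff $2r\equiv m\pmod M$ (using $\gcd(a,M)=1$; the same congruence governs the $b$-series). Set $R:=\{0\le r\le m:\ 2r\equiv m\pmod M\}$, $E_a(r):=(a(m-r)+(M-a)r)/M\in\mathbb Z$, and $g_r(x):=[Q^x]\,1/\bigl((Q;Q)_r(Q;Q)_{m-r}\bigr)$; then $c(m,Mn)=\sum_{r\in R}\bigl[g_r(n-E_a(r))-g_r(n-E_b(r))\bigr]$. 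The two structural identities I shall exploit are $E_a(r)+E_a(m-r)=m$ (independently of $a$) and $E_b(r)-E_a(r)=(b-a)(m-2r)/M$.

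The involution $r\mapsto m-r$ preserves $R$, and for each pair $(r,m-r)$ with $r<m/2$ the joint contribution to $c(m,Mn)$ equals the coefficient of $Q^n$ in
\[
\frac{Q^{E_a(r)}+Q^{m-E_a(r)}-Q^{E_b(r)}-Q^{m-E_b(r)}}{(Q;Q)_r(Q;Q)_{m-r}} = \frac{Q^{E_a(r)}(1-Q^{\delta})(1-Q^{D})}{(Q;Q)_r(Q;Q)_{m-r}},
\]
where $\delta:=(b-a)(m-2r)/M$ and $D:=(M-a-b)(m-2r)/M$. The crux is to cancel both $(1-Q^{\delta})$ and $(1-Q^{D})$ against two distinct factors of $(Q;Q)_{m-r}$. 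Since $r<m/2$ and $2r\equiv m\pmod M$, $(m-2r)/M$ is a positive integer, so $\delta\ge b-a\ge 1$ and $D\ge M-a-b\ge 2$ (using that $1\le a<b<M/2$ with $\gcd(b,M)=1$ forces $a+b\le M-2$); the hypothesis $b<M/2$ gives $b\ne M/2$ and hence $\delta\ne D$; a short arithmetic argument (reducing to $(a+b)(m-2r)+rM\ge 0$ or an analogous identity) yields $\delta,D\le m-r$. After cancellation each pair contribution becomes the $[Q^n]$-coefficient of $Q^{E_a(r)}/\bigl((Q;Q)_r\prod_{1\le i\le m-r,\,i\notin\{\delta,D\}}(1-Q^i)\bigr)$, a power series with non-negative coefficients; the fixed point $r=m/2$ (when $m$ is even) contributes zero because $E_a(m/2)=E_b(m/2)=m/2$. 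Summing over pairs proves (i).

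Part (ii) runs identically after replacing the congruence by $2r\equiv m-M/2\pmod M$ (valid because $a$ must be odd, since $\gcd(a,M)=1$ with $M$ even) and $E_a(r)$ by $E_a^{1/2}(r):=E_a(r)-1/2$, which is integral under this congruence and satisfies $E_a^{1/2}(r)+E_a^{1/2}(m-r)=m-1$; the same values of $\delta$ and $D$ arise, so the same factorization and cancellation go through, and no fixed point occurs since $M/2\not\equiv 0\pmod M$. The only real technical obstacle I anticipate is the arithmetic verification that $1\le\delta\ne D\le m-r$, which is precisely where all the coprimality and bounding hypotheses on $a$, $b$, and $M$ come into play.
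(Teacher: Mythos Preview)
Your proof is correct and follows essentially the same approach as the paper's: expand via the $q$-binomial theorem, fix the power of $s$, restrict to the relevant residue class of $m-2r$, pair $r$ with $m-r$, and factor the paired numerator as $Q^{E_a(r)}(1-Q^{\delta})(1-Q^{D})$ with $\delta=(b-a)(m-2r)/M$ and $D=(M-a-b)(m-2r)/M$, then cancel these two factors against $(Q;Q)_{m-r}$. Your notation ($Q=q^M$, $E_a(r)$, $\delta$, $D$) differs cosmetically from the paper's, and you make the arithmetic bounds $1\le\delta\ne D\le m-r$ slightly more explicit, but the argument is the same in substance.
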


\begin{proof}
We recall a special case of the $q$-binomial theorem:
\begin{equation}\label{btsc}
\sum_{n=0}^{\infty}\frac{z^n}{(q;q)_n}=\frac{1}{(z;q)_{\infty}}.
\end{equation}
Hence
\begin{multline}\label{ineqeq}
\frac{1}{(sq^a,sq^{M-a};q^M)_{\infty}}-\frac{1}{(sq^b,sq^{M-b};q^M)_{\infty}}\\
=\sum_{j,k\geq 0}\frac{s^{j+k}q^{a(j-k)+kM}}{(q^M;q^M)_j(q^M;q^M)_k}
-\sum_{j,k\geq 0}\frac{s^{j+k}q^{b(j-k)+kM}}{(q^M;q^M)_j(q^M;q^M)_k}.
\end{multline}
We fix the exponent of $s$ by setting $j+k=:m$, so that $j=m-k$ and the right side of \eqref{ineqeq} becomes
\begin{equation}\label{t1fm}
\sum_{m\geq 0}s^m\sum_{k=0}^m \frac{q^{a(m-2k)+kM}-q^{b(m-2k)+kM}}{(q^M;q^M)_{m-k}(q^M;q^M)_k}.
\end{equation}
Next, we restrict the values of $k$ so that when the inner sum is expanded as a power series, it contains only those powers of $q$ whose exponents are multiples of $M$ (so that the series multiplying $s^m$ is $\sum_{n=0}^{\infty}c(m,Mn)q^{Mn}$).

From the stated properties of $a$ and $b$, it can be seen that what is needed is the set of values of $k$ for which $m-2k$ is a multiple of $M$. If $m$ is even, then $k=m/2$ is such a value, and $q^{a(m-2k)+kM}-q^{b(m-2k)+kM}=0$ in this case. Hence we need only consider those $k$ in the intervals $0\leq k<m/2$ and $m/2<k\leq m$ satisfying $m-2k\equiv 0 (\mod M)$.

Next,  notice that every such $k'$ in the upper interval may be expressed as $k'=m-k$, for some $k$ in the lower interval, and every $k$ in the lower interval can be similarly matched with a $k'$ in the upper interval. Note that $m-2k\equiv 0 (\mod M)\Longleftrightarrow m-2(m-k)\equiv 0 (\mod M)$, and that the denominators of the summands remain invariant under the transformation $k\leftrightarrow m-k$. Hence
\begin{multline*}
\sum_{m,n\geq 0} c(m,Mn)s^m q^{Mn}\\
=\sum_{m\geq 0}s^m\sum_{\stackrel{0\leq k <m/2}{M|m-2k}} \frac{
\begin{matrix}q^{a(m-2k)+kM}-q^{b(m-2k)+kM}\phantom{asadasdasdaaaaaaaa}\\
\phantom{asaaaaaada}+q^{-a(m-2k)+(m-k)M}-q^{-b(m-2k)+(m-k)M}
\end{matrix}
}{(q^M;q^M)_{m-k}(q^M;q^M)_k}\\
=\sum_{m\geq 0}s^m\sum_{\stackrel{0\leq k <m/2}{M|m-2k}} \frac{
q^{a(m-2k)+kM}(1-q^{(m-2k)(b-a)})(1-q^{(m-2k)(M-b-a)})
}{(q^M;q^M)_{m-k}(q^M;q^M)_k}
\end{multline*}
Finally, $(m-2k)(b-a)$ and $(m-2k)(M-b-a)$ are each positive multiples of $M$ (since $M|m-2k$), and the conditions on $a$ and $b$ give that they are different multiples of $M$, each less than $(m-k)M$, so that the factors $(1-q^{(m-2k)(b-a)})$ and $(1-q^{(m-2k)(M-b-a)})$ are cancelled by two different factors in the $q$-product $(q^M;q^M)_{m-k}$. The remaining factors in the denominators may be expanded as geometric series with only non-negative coefficients, and the claim at (i) above follows.

The claim at (ii) follows similarly, upon noting that
\[
m-2k\equiv M/2 (\mod M)\Longleftrightarrow m-2(m-k)\equiv -M/2 \equiv M/2 (\mod M).
\]
\end{proof}

We next compare the results in Theorem \ref{t1} with the result in the Theorem \ref{bgt} of Berkovich and Garvan.   Our results are weaker than those of Berkovich and Garvan in Theorem \ref{bgt}, in the sense that setting $s=a=1$ in our theorem recovers only the case $L\to \infty$ in their theorem, and only in the the arithmetic progressions $0 (\mod M)$ and $M/2 (\mod M)$ (in the case $M$ is even). However, in the case of these arithmetic progressions, our result is stronger in two senses.

Firstly, the results hold for cases where $a>1$, in contrast to the result in Theorem \ref{bgt}, which holds only when $a=1$. Secondly, as we will see below, the inclusion of the parameter $s$ allows us to give stronger partition interpretations, in that the partition inequalities for integers in these arithmetic progressions also hold for any particular fixed number of parts.

\begin{corollary}\label{c1}
Let $M$, $a$ and $b$ be as in Theorem \ref{t1}. Let $p_{a, M, m}(n)$ denote the number of partitions of $n$ into exactly $m$ parts $\equiv \pm\, a (\mod M)$, and let  $p_{b,M,m}(n)$ likewise denote the number of partitions of $n$ into exactly $m$ parts $\equiv \pm\, b (\mod M)$. Then

 (i) $p_{a,M,m}(n M)\geq p_{b,M,m}(nM)$ for all integers $n\geq 1$, and all integers $m$, $1\leq m \leq Mn$.

 (ii) If $M$ is even, then $p_{a,M,m}(nM+ M/2)\geq p_{b,M,m}(nM+M/2)$ for all integers $n\geq 0$, and integers $m$ with $1\leq m \leq Mn+M/2$.
\end{corollary}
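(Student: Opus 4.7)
The plan is to observe that Corollary \ref{c1} is essentially a direct combinatorial reinterpretation of Theorem \ref{t1}, so the proof will consist of identifying the coefficients $c(m,n)$ defined in \eqref{t1eq} with the differences $p_{a,M,m}(n)-p_{b,M,m}(n)$ and then invoking the theorem. The work is therefore in setting up the generating function identification cleanly.

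First I would recall the standard generating function for partitions into parts drawn from a prescribed set, where each occurrence of a part is tracked by $q$ and the total number of parts is tracked by $s$. The parts that are $\equiv \pm a \pmod{M}$ are precisely the integers in the set $\{a+jM : j\geq 0\}\cup\{(M-a)+jM : j\geq 0\}$, and (since $1\leq a<M/2$) these two arithmetic progressions are disjoint. Consequently the bivariate generating function for partitions into exactly $m$ parts taken from this set is
\[
\sum_{m,n\geq 0} p_{a,M,m}(n)\,s^m q^n \;=\; \prod_{j\geq 0}\frac{1}{(1-sq^{a+jM})(1-sq^{M-a+jM})} \;=\; \frac{1}{(sq^a,sq^{M-a};q^M)_{\infty}}.
\]
An analogous identity holds with $a$ replaced by $b$. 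Subtracting and comparing with \eqref{t1eq} yields $c(m,n)=p_{a,M,m}(n)-p_{b,M,m}(n)$ for every $m,n\geq 0$.

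Next I would apply part (i) of Theorem \ref{t1} with $n$ replaced by the corresponding index to conclude $p_{a,M,m}(Mn)-p_{b,M,m}(Mn)=c(m,Mn)\geq 0$ for all $n\geq 1$ and all $m\geq 1$, which is exactly part (i) of the corollary. The upper bound $m\leq Mn$ is automatic since a partition of $Mn$ has at most $Mn$ parts (and the inequality is trivially $0\geq 0$ outside this range). Part (ii) follows identically from part (ii) of Theorem \ref{t1}, using the even-$M$ case of the generating function identification with $n$ replaced by $Mn+M/2$.

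There is no substantive obstacle: the key (and essentially only) step is the generating function bookkeeping, and the only point demanding a line of care is the observation that the two progressions $a+jM$ and $(M-a)+jM$ are disjoint, which is guaranteed by $a<M/2$ and ensures that the product $(sq^a,sq^{M-a};q^M)_\infty$ really does enumerate partitions into parts $\equiv\pm a\pmod{M}$ without double-counting.
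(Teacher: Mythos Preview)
Your proposal is correct and follows essentially the same approach as the paper: identify $c(m,n)$ with $p_{a,M,m}(n)-p_{b,M,m}(n)$ via the standard bivariate generating function and then invoke Theorem~\ref{t1}. The paper's proof is simply the one-line version of what you wrote, stating that ``clearly from \eqref{t1eq}, $p_{a,M,m}(nM)-p_{b,M,m}(nM)=c(m,Mn)$'' and then citing Theorem~\ref{t1}.
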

\begin{proof}
Clearly from \eqref{t1eq},  $p_{a,M,m}(n M)-p_{b,M,m}(nM)=c(m,Mn)$, so that (i) follows from Theorem \ref{t1}, and (ii) follows similarly.
\end{proof}

We next prove a companion result to that in Theorem \ref{t1}, one which has implications for the number of partitions into distinct parts.

\begin{theorem}\label{t2}
Let $M\geq 5$ be a positive integer, and let $a$ and $b$ be integers such that $1 \leq a< b <M/2$ and $\gcd(a,M)=\gcd(b,M)=1$. Define the integers $d(m,n)$ by \begin{equation}\label{t2eq}
(-sq^a,-sq^{M-a};q^M)_{\infty}-(-sq^b,-sq^{M-b};q^M)_{\infty}:=\sum_{m,n\geq 0} d(m,n)s^m q^n.
\end{equation}

(i) Then $d(m, Mn)\geq 0$ for all integers $m, n\geq 0$.

(ii) If, in addition, $M$ is even, then $d(m, Mn+M/2)\geq 0$ for all integers $m, n\geq 0$.
\end{theorem}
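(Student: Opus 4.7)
The plan is to mimic the proof of Theorem \ref{t1}, using in place of \eqref{btsc} the companion Euler identity
\[
(-z;q)_\infty=\sum_{n=0}^\infty \frac{q^{\binom{n}{2}}z^n}{(q;q)_n}.
\]
I would apply this (with $q$ replaced by $q^M$) to each of the four factors $(-sq^a;q^M)_\infty$, $(-sq^{M-a};q^M)_\infty$, $(-sq^b;q^M)_\infty$, $(-sq^{M-b};q^M)_\infty$, then, as in Theorem \ref{t1}, collect the exponent of $s$ by setting $m=j+k$. The left side of \eqref{t2eq} then takes the form
\[
\sum_{m\geq 0}s^m\sum_{k=0}^m \frac{q^{M\binom{m-k}{2}+M\binom{k}{2}+Mk}\bigl(q^{a(m-2k)}-q^{b(m-2k)}\bigr)}{(q^M;q^M)_{m-k}(q^M;q^M)_k},
\]
which is exactly the summand of \eqref{t1fm} multiplied by the factor $q^{M\binom{m-k}{2}+M\binom{k}{2}}$.

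The crucial observation is that this extra factor is a non-negative integer power of $q^M$ and is symmetric under the involution $k\leftrightarrow m-k$. Therefore the subsequent steps of the proof of Theorem \ref{t1} go through unchanged: restricting to $k$ with $m-2k\equiv 0\pmod M$ (forced since $\gcd(a,M)=\gcd(b,M)=1$ and $a\neq b$), dropping the vanishing term at $k=m/2$, and pairing each $k\in[0,m/2)$ with $m-k\in(m/2,m]$ produces, after the same algebraic manipulation as before, the factorization
\[
q^{M\binom{m-k}{2}+M\binom{k}{2}+Mk+a(m-2k)}\bigl(1-q^{(m-2k)(b-a)}\bigr)\bigl(1-q^{(m-2k)(M-a-b)}\bigr)
\]
inside the surviving sum.

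The final cancellation argument then carries over verbatim: since $0<a<b<M/2$, the two factors $(1-q^{(m-2k)(b-a)})$ and $(1-q^{(m-2k)(M-a-b)})$ are cancelled by two distinct factors of $(q^M;q^M)_{m-k}$, leaving a non-negative power of $q^M$ multiplied by a ratio of geometric series with non-negative coefficients. This proves (i). Part (ii) is obtained by the identical argument with $m-2k\equiv 0\pmod M$ replaced by $m-2k\equiv M/2\pmod M$, which is equally preserved under $k\leftrightarrow m-k$. The only thing that could, in principle, derail this transfer is interference between the new triangular factor $q^{M\binom{m-k}{2}+M\binom{k}{2}}$ and either the pairing symmetry or the index bounds used to locate the cancelling factors inside $(q^M;q^M)_{m-k}$; both concerns are dispatched at once by the invariance of this factor under the swap and by the fact that it appears only as a multiplicative prefactor outside the expression requiring cancellation.
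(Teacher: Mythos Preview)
Your proposal is correct and follows essentially the same route as the paper: the paper likewise expands via the Euler identity $(-a;q)_\infty=\sum_{n\geq 0}a^nq^{n(n-1)/2}/(q;q)_n$, sets $j+k=m$, observes that the resulting summand is that of \eqref{t1fm} times the $k\leftrightarrow m-k$--symmetric power $q^{M[(m-k)(m-k-1)/2+k(k-1)/2]}$ of $q^M$, and then simply notes that the remainder of the proof parallels Theorem~\ref{t1}. Your write-up is in fact slightly more explicit than the paper's in spelling out why the extra triangular factor cannot interfere with the pairing or the cancellation.
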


\begin{proof}
We begin by recalling another special case of the $q$-binomial theorem:
\begin{equation}\label{btsc2}
\sum_{n=0}^{\infty}\frac{a^n q^{n(n-1)/2}}{(q;q)_n}=(-a;q)_{\infty}.
\end{equation}
Hence
\begin{multline}\label{ineqeq2}
(-sq^a,-sq^{M-a};q^M)_{\infty}-(-sq^b,-sq^{M-b};q^M)_{\infty}\\
=\sum_{j,k\geq 0}\frac{s^{j+k}q^{a(j-k)+kM}q^{M[j(j-1)/2+k(k-1)/2]}}{(q^M;q^M)_j(q^M;q^M)_k}\\
-\sum_{j,k\geq 0}\frac{s^{j+k}q^{b(j-k)+kM}q^{M[j(j-1)/2+k(k-1)/2]}}{(q^M;q^M)_j(q^M;q^M)_k}
\end{multline}
We again fix the exponent of $s$ by setting $j+k=:m$, so that $j=m-k$ and the right side of \eqref{ineqeq2} becomes
\begin{equation}\label{t2fm}
\sum_{m\geq 0}s^m\sum_{k=0}^m \frac{(q^{a(m-2k)+kM}-q^{b(m-2k)+kM})q^{M[(m-k)(m-k-1)/2+k(k-1)/2]}}{(q^M;q^M)_{m-k}(q^M;q^M)_k}
\end{equation}

Note that the factor $(q^{a(m-2k)+kM}-q^{b(m-2k)+kM})$ in \eqref{t2fm} is the same as that in the numerator of \eqref{t1fm}, and that the rest of the summand in \eqref{t2fm} remains invariant under the transformation $k\longleftrightarrow m-k$. Hence the remainder of the proof parallels that of Theorem \ref{t1}, and so is omitted.
\end{proof}

The result in Theorem \ref{t2} may be interpreted in terms of certain restricted partitions into distinct parts.

\begin{corollary}\label{c2}
Let $M$, $a$ and $b$ be as in Theorem \ref{t2}. Let $p^*_{a, M, m}(n)$ denote the number of partitions of $n$ into exactly $m$ distinct parts $\equiv \pm\, a (\mod M)$, and let  $p^*_{b,M,m}(n)$ denote the number of partitions of $n$ into exactly $m$ distinct parts $\equiv \pm\, b (\mod M)$. Then

 (i) $p^*_{a,M,m}(n M)\geq p^*_{b,M,m}(nM)$ for all integers $n\geq 1$, and all integers $m$,  $1\leq m \leq Mn$.

 (ii) If $M$ is even, then $p^*_{a,M,m}(nM+ M/2)\geq p^*_{b,M,m}(nM+M/2)$ for all integers $n\geq 0$, and integers $m$ with $1\leq m \leq Mn+M/2$.
\end{corollary}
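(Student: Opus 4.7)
The plan is to recognize the left side of \eqref{t2eq} as a bivariate generating function for restricted partitions into distinct parts, then read off the corollary as a direct translation of Theorem \ref{t2}.

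First I would expand the infinite product combinatorially: since
\[
(-sq^a,-sq^{M-a};q^M)_{\infty}=\prod_{k\geq 0}(1+sq^{a+kM})(1+sq^{M-a+kM}),
\]
choosing the term $sq^{a+kM}$ (or $sq^{M-a+kM}$) from a finite collection of the factors corresponds to selecting a set of distinct positive integers, each congruent to $a$ or $M-a$ (mod $M$), i.e.\ $\equiv \pm a\pmod M$. The weight contributed is $s^{m}q^{n}$, where $m$ is the total number of factors selected and $n$ is the sum of the chosen parts. Hence the coefficient of $s^{m}q^{n}$ in this product is precisely $p^{*}_{a,M,m}(n)$, and the analogous statement with $a$ replaced by $b$ holds for $p^{*}_{b,M,m}(n)$.

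Subtracting, the defining equation \eqref{t2eq} gives
\[
d(m,n)=p^{*}_{a,M,m}(n)-p^{*}_{b,M,m}(n)
\]
for all $m,n\geq 0$. Part (i) of the corollary is then exactly the assertion $d(m,Mn)\geq 0$ from Theorem \ref{t2}(i), specialized to $1\leq m\leq Mn$ (values of $m$ exceeding $Mn$ trivially give both partition counts equal to zero since each part is at least $1$). Part (ii) follows identically from Theorem \ref{t2}(ii) when $M$ is even, applied to $n$ of the form $Mn+M/2$.

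There is no substantive obstacle here: the whole content of the corollary is the combinatorial identification of the product $(-sq^{a},-sq^{M-a};q^{M})_{\infty}$ as the bivariate generating function counting partitions into distinct parts $\equiv\pm a\pmod M$ by number of parts and total size, after which Theorem \ref{t2} supplies the inequality. This is the exact analogue of how Corollary \ref{c1} was deduced from Theorem \ref{t1}, the only change being that the factors $(1+sq^{a+kM})$ encode \emph{distinct}-part partitions rather than the unrestricted partitions encoded by $1/(1-sq^{a+kM})$.
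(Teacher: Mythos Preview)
Your proposal is correct and follows essentially the same approach as the paper: the paper's proof simply asserts that $p^{*}_{a,M,m}(nM)-p^{*}_{b,M,m}(nM)=d(m,Mn)$ from \eqref{t2eq} and then invokes Theorem~\ref{t2}. You have merely supplied the (standard) combinatorial justification for why the coefficients of $(-sq^a,-sq^{M-a};q^M)_{\infty}$ count partitions into distinct parts $\equiv \pm a\pmod M$ by number of parts and size, which the paper leaves implicit.
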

\begin{proof}
The proof is immediate from Theorem \ref{t2}, since from \eqref{t2eq},
\[
p^*_{a,M,m}(n M)-p^*_{b,M,m}(nM)=d(m,Mn),
\]
 so that (i) follows. The claim at (ii) follows similarly.
\end{proof}

\section{Concluding Remarks}

A number of obvious questions present themselves.

1. Are there combinatorial proofs of the inequalities in Corollaries \ref{c1} and \ref{c2}? More precisely, is there an injection from the partitions counted by $p_{b,M,m}(nM)$ to those counted by $p_{a,M,m}(nM)$, and an injection from the partitions counted by $p^*_{b,M,m}(nM)$ to those counted by $p^*_{a,M,m}(nM)$?

2. Are there any cases where ``finite'' versions of Theorems \ref{t1} and \ref{t2} hold, in the sense that if the ``$\infty$'' in the infinite products is replaced by a positive integer $L$ to give finite products, then all integers  $c(m,Mn)$ and $d(m,Mn)$ are still non-negative?

3. Theorem \ref{t1} may be thought of as a partial refinement/extension of Theorem \ref{bgt}. Do any of the other theorems in the introduction have similar partial refinement/extensions?

4. It seems that Theorem \ref{t1}  is not the end of the story for the type of infinite product difference shown on the left side of \eqref{t1eq}. For example, if
\begin{equation}\label{t1conj1}
\frac{1}{(sq^3,sq^{13};q^{16})_{\infty}}-\frac{1}{(sq^7,sq^{9};q^{16})_{\infty}}:=\sum_{m,n\geq 0} c(m,n)s^m q^n,
\end{equation}
then numerical evidence suggests that
\begin{align}
&c(m,16n+12)\geq 0,&& \,\, \text{ for all } n \geq 0,&& \,\, 1 \leq m \leq 16n+12,&\numberthis \label{conj11}\\
&c(m,16n+15)\geq 0,&& \,\, \text{ for all } n \geq 0,&& \,\, 1 \leq m \leq 16n+15.&\numberthis \label{conj12}
\end{align}

These observations motivate the following general problem.

Let $M\geq 5$ be a positive integer, and let $a$, $b$ and $r$ be integers such that $1 \leq a< b <M/2$ and $\gcd(a,M)=\gcd(b,M)=1$ and define the integers $c(a,b,M,m,n)$ by
\begin{equation*}
\frac{1}{(sq^a,sq^{M-a};q^M)_{\infty}}-\frac{1}{(sq^b,sq^{M-b};q^M)_{\infty}}:=\sum_{m,n\geq 0} c(a,b,M,m,n)s^m q^n.
\end{equation*}
Find all quadruples $(M,a,b,r)$ such that
\begin{equation}\label{cabeq}
 c(a,b,M,m, Mn+r)\geq 0, \hspace{25pt} \text{ for all } \,n\geq 0,\hspace{25pt}  \text{ for all } \, m\in [1, Mn+r].
\end{equation}

To help motivate further study of the problem, we list some such quadruples $(M, a, b ,r)$  in the  table below, values not given by Theorem \ref{t1} and for which experimental evidence suggests  \eqref{cabeq} holds.
\begin{table}[ht]
\centering
  \begin{tabular}{ |c | c | c|c |}
    \hline
   $M$ & $a$ & $b$ & $r$ \\ \hline
    12 & 1 & 5 &3, 4\\ \hline
    16 & 1 & 5 &4\\
     & 1 & 7 &3,4,6\\
    & 3 & 7 &12,15\\
    \hline
    18 & 1 & 7 &3,5,6\\\hline
    20 & 1 & 9 &3,4,5,6,8\\
    & 3 & 7 &4,15\\
    & 3 & 9 &1,12,15\\\hline
    24 & 1 & 5 &6\\
      & 1 & 7 &4,8,9\\
    & 1 & 11 &3,4,5,6,8,10\\
    & 5 & 11 &1,6,16,20,21\\
    & 7 & 11 &8,18\\
    \hline
  \end{tabular}
  \caption{Quadruples $(M, a, b ,r)$    not given by Theorem \ref{t1} and for which experimental evidence suggests  \eqref{cabeq} holds.}\label{Ta:t1}
  \end{table}

Note that $M$ is even for all values in the table.

Of course any set of values for  $M$, $a$, $b$ and $r$ for which \eqref{cabeq} holds
 also implies an infinite family of partition inequalities. Let $M$, $a$, $b$ and $r$  such that \eqref{cabeq} holds.  If $p_{a,M}(m,n)$ (respectively $p_{b,M}(m,n)$) denotes the number of partitions of the integer $n$ into exactly $m$ parts $\equiv \pm a (\mod M)$ (respectively $\equiv \pm b (\mod M)$), then for all positive integers $n$, and all  $m \in [1 , Mn+r]$,
\begin{equation}\label{partconj}
p_{a,M}(m,Mn+r)\geq p_{b,M}(m,Mn+r).
\end{equation}

5. Likewise, it seems that Theorem \ref{t2}  is not the end of the story either, for the type of infinite product difference shown on the left side of \eqref{t2eq}. Let $M\geq 8$ be an even positive integer, and let $a$ and $b$  be integers such that $1 \leq a< b <M/2$ and $\gcd(a,M)=\gcd(b,M)=1$. Define the polynomials $p_{a,b,M,n}(s)$ by
\begin{equation}\label{t1conj3}
(-sq^a,-sq^{M-a};q^M)_{\infty}-(-sq^b,-sq^{M-b};q^M)_{\infty}:=\sum_{n\geq 0} p_{a,b,M,n}(s) q^n.
\end{equation}
Experimental evidence appears to suggest that if $r$ is any fixed integer, $0\leq r \leq M-1$, then for all $n\geq 0$, all of the coefficients of $p_{a,b,M,Mn+r}(s)$ have the same sign.
As an example, $p_{1,3,8,n}(s)$ is shown in the following table for $320\leq n \leq 327$.

\begin{table}[ht]
\centering
  \begin{tabular}{|c|cc|}
  \hline
  n&& $p_{1,3,8,n}(s)$\\ \hline
 320 &&$ 3 s^8
   \left(s^2+5\right) $\\
 321 &&$ s \left(249 s^{10}+3872
   s^8+8355 s^6+3705 s^4+273
   s^2+1\right) $\\
 322 &&$ s^4 \left(10 s^8+614
   s^6+2367 s^4+1424
   s^2+127\right) $\\
 323 &&$ -s \left(161
   s^{10}+2775 s^8+6858
   s^6+3380 s^4+267
   s^2+1\right) $\\
 324 &&$ s^4 \left(2 s^8+141
   s^6+391 s^4+123
   s^2+3\right) $\\
 325 &&$ -s \left(266
   s^{10}+4010 s^8+8729
   s^6+3862 s^4+280
   s^2+1\right) $\\
 326 &&$ -s^4 \left(10 s^8+548
   s^6+2154 s^4+1375
   s^2+127\right) $\\
 327 &&$ s \left(228 s^{10}+3474
   s^8+7728 s^6+3582 s^4+273
   s^2+1\right) $\\ \hline
\end{tabular}
  \caption{The coefficients in $p_{1,3,8,n}(s)$ all have the same sign, $320\leq n \leq 327$.}\label{Ta:t2}
  \end{table}

This pattern of signs for all of the coefficients of $p_{1,3,8,n}(s)$ in Table \ref{Ta:t2}, namely $+,+,+,$ $-,+,-,-, +$, repeats modulo 8, as $n$ cycles through the various residue classes modulo 8 (this was checked up to $n=1920$). At this point we are unable to say if this pattern eventually breaks down for $n$ large enough, or if it holds for all even $M\geq 8$.  No similar patterns appear to hold for $M$ odd.

We leave it to others to hopefully cast further light on these questions.

 \allowdisplaybreaks{

}
\end{document}